\theoremstyle{plain}
\newtheorem*{theorem*}{Theorem}
\newtheorem{theorem}{Theorem}[section]
\newtheorem{lemma}[theorem]{Lemma}
\newtheorem{proposition}[theorem]{Proposition}
\newtheorem*{claim*}{Claim}
\newtheorem{conjecture}[theorem]{Conjecture}
\newtheorem{problem}[theorem]{Problem}
\theoremstyle{remark}
\def\M{\mathcal{M}}
\def\S{\mathcal{S}}
\def\N{\mathbb{N}}
\def\Z{\mathbb{Z}}
\def\C{\mathscr}
\def\B{\mathbf}
\DeclareMathOperator\Deg{d}
\DeclareMathOperator\Hom{hom}
\let\emptyset\varnothing
\let\eps\varepsilon
\let\originalleft\left
\let\originalright\right
\renewcommand{\left}{\mathopen{}\mathclose\bgroup\originalleft}
\renewcommand{\right}{\aftergroup\egroup\originalright}
\def\imod#1{\allowbreak\mkern10mu({\operator@font mod}\,\,#1)}
\def\l_p[#1]{(\log m)^{#1}}
\begin{document}

\title{The multiplication table problem for bipartite graphs}

\author{Bhargav Narayanan}
\address{Department of Pure Mathematics and Mathematical Statistics, University of Cambridge, Wilberforce Road, Cambridge CB3\thinspace0WB, UK}
\email{b.p.narayanan@dpmms.cam.ac.uk}

\author{Julian Sahasrabudhe}
\address{Department of Mathematical Sciences, University of Memphis, Memphis TN 38152, USA}
\email{julian.sahasra@gmail.com}

\author{Istv\'{a}n Tomon}
\address{Department of Pure Mathematics and Mathematical Statistics, University of Cambridge, Wilberforce Road, Cambridge CB3\thinspace0WB, UK}
\email{i.tomon@dpmms.cam.ac.uk}

\date{18 August 2014}
\subjclass[2010]{Primary 05C35; Secondary 11B30}

\begin{abstract}
We investigate the following generalisation of the `multiplication table problem' of Erd\H{o}s: given a bipartite graph with $m$ edges, how large is the set of sizes of its induced subgraphs? Erd\H{o}s's problem of estimating the number of distinct products $ab$ with $a,b \le n$ is precisely the problem under consideration when the graph in question is the complete bipartite graph $K_{n,n}$. In this note, we prove that the set of sizes of the induced subgraphs of any bipartite graph with $m$ edges contains $\Omega(m/\l_p[12])$ distinct elements.
\end{abstract}

\maketitle

\section{Introduction}
For a bipartite graph $G$, we define, writing $e(.)$ for the number of edges of a graph, its \emph{multiplication table $\M(G)$} by setting
\[ \M(G) = \{ e(H) : H \mbox{ is an induced subgraph of } G \};\]
our reasons for calling $\M(G)$ the multiplication table of $G$ will soon become evident. While the definition of $\M(G)$ above is meaningful for any graph $G$, we shall mainly be concerned with bipartite graphs in this note.

It seems likely that the bipartite graphs with the smallest multiplication tables are the complete bipartite graphs; writing $K_{n,n}$ for the complete bipartite graph between two disjoint sets of $n$ vertices, we conjecture the following.

\begin{conjecture}\label{mult14-c:main}
Let $n \in \N$ and suppose that $G$ is a bipartite graph with $e(G) = n^2$. Then $| \M(G) | \ge |\M(K_{n,n})|$.
\end{conjecture}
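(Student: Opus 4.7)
The plan is to attack the conjecture in two stages: first reduce to complete bipartite graphs by a compression argument, and then establish a sharper form in the biclique case.

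Step 1 (reduction to bicliques). I would seek a local operation on a bipartite graph $G$ that pushes it towards a biclique while preserving the edge count and not increasing $|\M(G)|$. The natural candidate is vertex compression on one side: given $u, v$ with $N(u) \not\subseteq N(v)$ and $N(v) \not\subseteq N(u)$, swap edges between $u$ and $v$ so that one neighbourhood contains the other. Iterating produces a bipartite graph with nested neighbourhoods on both sides, which a short case analysis identifies with a disjoint union of some $K_{a,b}$ with isolated vertices, where $ab = n^2$ (isolated vertices do not alter $\M$). The crux is verifying that each such move does not increase $|\M(G)|$; heuristically, nesting the neighbourhoods creates more coincidences among induced subgraph sizes rather than splitting values apart, but making this precise is delicate since $\M$ is a global rather than local invariant.

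Step 2 (the biclique case). For $G = K_{a,b}$ with $ab = n^2$ we have $\M(K_{a,b}) = \{xy : 0 \le x \le a,\ 0 \le y \le b\}$, so the conjecture reduces to: $|\{xy : x \le a,\ y \le b\}|$ is minimised, over all factorisations $ab = n^2$, at $a = b = n$. Heuristically this is because balanced products incur the most multiplicative collisions. Quantitatively, Ford's theorem identifies $|\M(K_{n,n})| \asymp n^2 / \bigl( (\log n)^\delta (\log \log n)^{3/2}\bigr)$ for $\delta = 1 - (1+\log\log 2)/\log 2$, and a refinement of Ford's argument tracking the joint distribution of prime factors of $x$ and $y$ should show that the unbalanced case is at least as large, with a correction factor depending on $|\log(a/b)|$.

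Main obstacle and alternative. The principal obstacle is Step 1: the function $|\M(G)|$ is combinatorially delicate and is not monotone under most natural operations on $G$, so finding the right compression and proving the appropriate monotonicity requires genuine new insight. A more flexible alternative, which avoids the reduction to bicliques altogether, is the direct probabilistic route: sample vertices independently with well-chosen probabilities and control the distribution of $e(G[S])$ via concentration. This is essentially the approach that the present paper uses to establish the weaker $\Omega(m/(\log m)^{12})$ bound. Sharpening such a probabilistic argument to match Ford's exponent would require encoding the Hardy--Ramanujan statistics on typical prime factorisations into the random variable $e(G[S])$, and it is far from clear how to do this for a general bipartite $G$ that carries no inherent multiplicative structure.
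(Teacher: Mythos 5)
First, note that the statement you are addressing is Conjecture~\ref{mult14-c:main}, which the paper does not prove; it is stated as open, and the paper only establishes the weak quantitative form $|\M(G)| = \Omega\left(m/\l_p[12]\right)$ of Theorem~\ref{mult14-t:main}. Your proposal is likewise not a proof but a plan, and both of its steps contain genuine gaps. In Step 1, the monotonicity of $|\M(G)|$ under vertex compression is exactly the missing ingredient: you acknowledge it is ``delicate,'' but nothing in the proposal makes it plausible, and since $\M$ is determined by the full family of induced subgraph counts, a single compression can both merge and split values of $e(H)$; no local accounting is offered. Moreover, even granting monotonicity, the endpoint of the compression process is misidentified: a bipartite graph whose neighbourhoods are nested on one side is a chain (difference) graph, e.g.\ a half-graph, and is in general very far from a disjoint union of a biclique and isolated vertices, so the ``short case analysis'' does not exist and the reduction to $K_{a,b}$ fails. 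In Step 2, the inequality $|\M(K_{a,b})| \ge |\M(K_{n,n})|$ for all factorisations $ab = n^2$ is itself an open problem of the same flavour as the conjecture; Ford's theorem gives asymptotics with unspecified implicit constants (and an error term), so no ``refinement tracking the joint distribution of prime factors'' can yield the exact, every-$n$ inequality that the conjecture demands. An asymptotic comparison cannot settle an exact extremal statement.

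Separately, your description of the paper's method is inaccurate: the paper does not sample vertices randomly and apply concentration. Its argument is deterministic and constructive: a partitioning lemma (Lemma~\ref{mult14-largepartition}) groups vertices of one side into blocks of equal degree-sum, the graph is contracted to a half-regular bipartite multigraph, and then explicit families of induced subgraphs are produced whose sizes form large sets of sums; the lower bounds on these sets come from elementary number theory (Lemma~\ref{mult14-modulolemma}, the gcd estimate of Lemma~\ref{mult14-gcdlemma}, and primes in intervals via Lemma~\ref{mult14-primes}). So the ``alternative route'' you describe as essentially the paper's is not, and in any case it is aimed at the weak bound rather than at Conjecture~\ref{mult14-c:main}, whose exact form appears to require ideas not present either in your proposal or in the paper.
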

In this note, our aim is to prove Conjecture~\ref{mult14-c:main} in a weak quantitative form. It turns out that $|\M(K_{n,n})| = o(n^2)$; in fact, as was shown by Ford~\citep{Ford1}, there exists an absolute constant $\delta \approx 0.086$ such that $|\M(K_{n,n})| = n^2/(\log n)^{\delta+o(1)}$. Our main result, stated below, gives a comparable bound for an arbitrary bipartite graph.

\begin{theorem}\label{mult14-t:main}
If $G$ is a bipartite graph with $m$ edges, then
\[ |\M(G)| = \Omega \left( \frac{m}{\l_p[12]} \right). \]
\end{theorem}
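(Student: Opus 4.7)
The plan is to set up a simple dichotomy based on the density of $G$.

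First, if $G$ has at least $m/(\log m)^{12}$ non-isolated vertices, I am done immediately. Ordering the non-isolated vertices as $v_1,\ldots,v_n$ and setting $G_k := G - \{v_1,\ldots,v_k\}$, each deletion drops the edge count by the current $G_k$-degree of $v_{k+1}$, which can always be chosen to be positive while any edges remain. This produces a strictly decreasing sequence $e(G_0) > e(G_1) > \cdots > e(G_n) = 0$, yielding $n+1$ distinct values in $\M(G)$.

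So I may assume $G$ has fewer than $m/(\log m)^{12}$ non-isolated vertices, forcing the average degree to exceed $2(\log m)^{12}$. I then apply iterated dyadic pigeonholing on the two degree sequences of $G$: after finitely many rounds this yields an induced subgraph $H$ with bipartition $A' \cup B'$ of sizes $n_A, n_B$ and edge count $e(H) \ge m/(\log m)^{C_1}$, such that every vertex of $A'$ (resp.\ $B'$) has $H$-degree in a dyadic interval $[d_A, 2d_A)$ (resp.\ $[d_B, 2d_B)$); the density hypothesis further ensures $d_A, d_B \ge (\log m)^{C_2}$ for a suitable constant $C_2$.

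I next link back to the classical problem via Ford's theorem. For each pair $(s, t)$ with $s \le n_A$, $t \le n_B$, draw $U \subseteq A'$ and $V \subseteq B'$ uniformly at random of sizes $s$ and $t$, and set $X_{s,t} := e(H[U \cup V])$. A direct moment calculation gives $\E X_{s,t} = st\rho$ and $\V X_{s,t} = O(st\rho)$, where $\rho := e(H)/(n_A n_B)$. Ford's theorem in its rectangular form on $[1,n_A] \times [1,n_B]$ shows that $\{st : s \le n_A,\, t \le n_B\}$ contains $\Omega(n_A n_B /(\log m)^{\delta + o(1)})$ distinct products, with $\delta \approx 0.086$. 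Decomposing these products dyadically by magnitude and applying a packing argument to discard those products whose Chebyshev windows $st\rho \pm O(\sqrt{st\rho})$ overlap, I extract $\Omega(e(H)/(\log m)^{C_3})$ distinct realisations of $X_{s,t}$, all lying in $\M(H) \subseteq \M(G)$.

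The main obstacle is bookkeeping the log losses in the last paragraph so the overall exponent stays at most $12$. These losses come from (i) the dyadic reduction, (ii) Ford's exponent $\delta + o(1)$, (iii) the Chebyshev fluctuation $\sqrt{st\rho}$, which forces discarding a logarithmic fraction of the products per dyadic scale, and (iv) the sum over scales. All four contributions are routine to trace but need to be controlled; the exponent $12$ is chosen generously to absorb them.
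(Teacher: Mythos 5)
There is a genuine gap, and it is in the step that carries all the weight: the passage from many distinct products $st$ to many distinct induced subgraph sizes. First, the variance claim $\V X_{s,t}=O(st\rho)$ is false in general, even for graphs satisfying your bi-dyadic degree conditions: take $H$ to be the disjoint union of two copies of $K_{n,n}$ and $s=t=n$; then $X_{s,t}=|U\cap A_1|\,|V\cap B_1|+|U\cap A_2|\,|V\cap B_2|$ has standard deviation of order $n^{3/2}$, while $\sqrt{st\rho}\asymp n$. The correct order of the fluctuation is $st\rho\,(1/\sqrt{s}+1/\sqrt{t})$ in typical ranges. Second, and more fundamentally, no second-moment bound can rescue the scheme: Chebyshev only localises $X_{s,t}$ in a window of polynomial width around $st\rho$ (which is not even an integer), so distinct products $st$ do not give distinct realised values of $e(H[U\cup V])$; and once you insist, as your packing step must, on pairwise disjoint windows inside $[0,e(H)]$, the number of windows you can fit is governed by their widths and comes out at roughly $e(H)^{1/2}$ (up to logarithms) at best --- i.e.\ the method cannot beat the trivial $\Omega(m^{1/2})$ bound that the introduction already dismisses. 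Concentration gives $X_{s,t}=st\rho$ exactly only when $H$ is complete bipartite, which is precisely Erd\H{o}s's original setting; for any other $H$ the fluctuations are vastly larger than the integer spacing you need, so invoking Ford's theorem for the products $st$ does not transfer to $\M(H)$.

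The reduction in your second step is also unjustified: ``iterated dyadic pigeonholing'' on both degree sequences does not obviously terminate, because after restricting $B$ to a dyadic class you must pass to the induced subgraph, which destroys the degree control just obtained on $A$; you give no argument that a bounded number of rounds yields both $d_A,d_B\ge(\log m)^{C_2}$ and only polylogarithmic edge loss, and this two-sided regularisation is exactly the difficulty the paper avoids. The paper equalises only one side, and not by finding equal degrees but by partitioning one vertex class into blocks with equal degree \emph{sums} (Lemma~\ref{mult14-largepartition}) and contracting to a half-regular multigraph; the distinct sizes are then produced \emph{exactly}, via Lemmas~\ref{mult14-modulolemma}, \ref{mult14-numtheo}, \ref{mult14-primes}, \ref{mult14-gcdlemma} and~\ref{mult14-graphlemma}, rather than approximately via concentration. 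A smaller point: in your first case, deleting all non-isolated vertices need not give $n+1$ distinct sizes (a perfect matching has $2m$ non-isolated vertices but only $m+1$ sizes can arise); a leaf-deletion order along a spanning forest guarantees only about half of them, which is harmless here but the claim as written is false.
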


Let us make a few remarks about Theorem~\ref{mult14-t:main}. First, in the light of our earlier remarks about complete bipartite graphs, it is clear that we cannot do away with the logarithmic factor in our result; indeed, we cannot replace the exponent $12$ in the statement of Theorem~\ref{mult14-t:main} by an exponent smaller than $\delta \approx 0.086$. Next, we should point out that it is trivial to prove that $|\M(G)| = \Omega(m^{1/2})$ for any graph $G$ with $m$ edges; to see this, note that any such graph either contains a vertex of degree $\Omega(m^{1/2})$ or an induced matching of size $\Omega(m^{1/2})$, and we are done in either case. If we do not insist that our graph is bipartite, this trivial argument can be seen to be essentially tight by considering, for example, the complete graph. Hence, to beat this trivial bound of $m^{1/2}$, it is necessary to exploit the fact that we are working with bipartite graphs. We urge the reader to pause for a moment and consider the question of beating this trivial lower bound of $m^{1/2}$ for bipartite graphs; while Theorem~\ref{mult14-t:main} improves on this bound considerably, we do not have a short proof of even a lower bound of, say, $m^{2/3}$. Finally, while it might be possible to refine our methods to prove a bound of the form say, $\Omega(m/\l_p[9])$, it seems unlikely that our proof can be adapted to prove Conjecture~\ref{mult14-c:main}, or for that matter, to even prove a bound of $\Omega(m/\log m)$; hence, we make no serious attempt to optimise the logarithmic factors in our proof.

Our main motivation for studying Conjecture~\ref{mult14-c:main} is because it is a natural combinatorial generalisation of a number theoretic problem, now known colloquially as the `multiplication table problem', posed by Erd\H{o}s~\citep{Erdos1} in 1955. For $n \in \N$, write $[n]$ for the set $\{1, \dots, n\}$ and $[n] \cdot [n]$ for the set of distinct products $ab$ with $a, b \le n$; the multiplication table problem is then simple to state: how large is $[n] \cdot [n]$? Observe that $[n] \cdot [n]$ is precisely the set of sizes of the induced subgraphs of $K_{n,n}$; consequently, the multiplication table problem can be rephrased as follows: how large is $|\M(K_{n,n})|$? In this paper, we generalise this question and ask how large $|\M(G)|$ is for an arbitrary bipartite graph $G$ with a prescribed number of edges.

The multiplication table problem has received a great deal of attention over the past five decades. Erd\H{o}s~\citep{Erdos1} showed, using the fact that almost all natural numbers less that $n$ have about $\log\log n$ distinct prime factors, that the cardinality of $[n] \cdot [n]$ is $o(n^2)$ as $n \to \infty$. Subsequently, better bounds were obtained, first by Erd\H{o}s~\citep{Erdos2} and then by Tenenbaum~\citep{Tenenbaum}. Despite its innocuous appearance, the multiplication table problem has been settled only recently; a deep result of Ford~\citep{Ford1} asserts that as $n \to \infty$,
\[  \big\lvert [n] \cdot [n]  \big\rvert = \Theta \left( \frac{n^2}{(\log{n})^\delta (\log{\log{n}})^{3/2}}\right), \]
where $\delta = 1 - (1+\log\log2)/\log{2} \approx 0.086$. For results about higher dimensional analogues of the multiplication table problem, we refer the reader to the papers of Koukoulopoulos~\citep{Koukoulopoulos1, Koukoulopoulos2}. Erd\H{o}s also posed a number of related number theoretic problems; see the book of Hall and Tenenbaum~\citep{erd_book} and the section on the statistical theory of divisors in the comprehensive survey of Ruzsa~\citep{erd_survey}.

The problem studied in this paper is also closely related to a number of combinatorial results about induced subgraph sizes that have been proved over the course of the last thirty years. Many of these questions and results about the sizes of induced subgraph arise from trying to better understand the structure of \emph{Ramsey graphs}; we discuss some of these problems below.

A subset of the vertices of a graph is said to be \emph{homogeneous} if it either induces a clique or an independent set; let us write $\Hom(G)$ for the size of the largest homogeneous set of vertices in a graph $G$. Alon and Bollob\'as~\citep{AB} (see also~\citep{EH}) proved that any graph without a large homogeneous set necessarily contains many distinct (non-isomorphic) induced subgraphs; in their proof, they distinguished between induced subgraphs using, amongst other parameters, their order and size. Subsequently, Erd\H{o}s, Faudree and S\'os~\citep{EFS1, EFS2} conjectured that for every $C >0$, there exists an $\eps = \eps(C)>0$ such that if $G$ is an $n$-vertex graph with $\Hom(G) \le C \log n$, then the number of distinct pairs $(x, y)$ such that $G$ has an induced subgraph on $x$ vertices inducing $y$ edges is at least $\eps n^{5/2}$. While this conjecture still remains unresolved, a number of partial results have been proved; see the papers of Axenovich and Balogh~\citep{Ax}, Alon and Kostochka~\cite{AK}, and Alon, Balogh, Kostochka and Samotij~\citep{ABKW} for the state of the art.

Another conjecture with a similar flavour, due to Erd\H{o}s and McKay~\citep{EFS1, EM}, also far from settled, asserts that for every $C >0$, there exists an $\eps=\eps(C)>0$ such that if $G$ is an $n$-vertex graph with $\Hom(G) \le C\log n$, then $G$ contains an induced subgraph with precisely $y$ edges for every integer $y$ between $0$ and $\eps n^2$; the best known bounds for this problem are due to Alon, Krivelevich and Sudakov~\citep{AKS}.

The sizes of the induced subgraphs of a random graph have also been investigated; we refer the reader to the paper of Calkin, Frieze and McKay~\cite{CFM} for details. Finally, let us also mention that the connection between the multiplication table problem and the sizes of induced subgraphs of complete bipartite graphs was exploited by the first author in~\citep{m_col} to construct `good' colourings for a Ramsey theoretic problem.

Returning to the question at hand, let us discuss, very briefly, one of the difficulties in proving Theorem~\ref{mult14-t:main}. Note that a proof of Theorem~\ref{mult14-t:main} should also establish that $|[n] \cdot [n]| = \Omega(n^2/(\log{n})^{12})$. To prove such a weak estimate for the size of $[n] \cdot [n]$ is in itself not difficult. One could use the prime number theorem to show that one has many distinct products of the form $ab$ in the set $[n] \cdot [n]$ where both $a$ and $b$ are prime. Alternatively, one could use the fact that the set $[n]$ has small `additive doubling' to conclude, using a beautiful theorem of Solymosi~\citep{Solymosi}, that $[n] \cdot [n]$ is large. These are however the only ways, to the best of our knowledge, of proving a reasonable lower bound for $|[n] \cdot [n]|$ without resorting to somewhat involved divisor estimates, and neither of these methods would appear to generalise easily to the setting of bipartite graphs. Hence, to prove Theorem~\ref{mult14-t:main}, we shall require, in addition to graph theoretic techniques, a few additive combinatorial and number theoretic tools; some of these might be of independent interest.

The rest of this paper is organised as follows. In Section~\ref{mult14-s:prelim}, we establish some notation and then prove some straightforward number theoretic estimates. We then describe our strategy for proving Theorem~\ref{mult14-t:main} in Section~\ref{mult14-s:strat}. After establishing a key partitioning lemma in Section~\ref{mult14-s:part}, we prove Theorem~\ref{mult14-t:main} in Section~\ref{mult14-s:proof}. We conclude by discussing some problems in Section~\ref{mult14-s:conc}.

\section{Preliminaries}\label{mult14-s:prelim}
In this section, we establish some notation and collect together some number theoretic estimates that we shall make use of when proving our main result.

\subsection{Notation}
Given a set $S$ and $r \in \N$, we write $S^{(r)}$ for the family of subsets of $S$ of cardinality $r$. Given $A, B \subset \Z$, we write $A + B$ and $A \cdot B$ respectively for the set of distinct sums $a+b$ and products $ab$ with $a \in A$ and $b \in B$.

It will help to have some notation in place for working with finite sequences. Given a sequence of integers $\B{a} = (a_i)_{i=1}^n$, we define $\S(\B{a})$, its \emph{set of sums}, by setting
\[ \S(\B{a}) =\left\{\sum_{i\in I}a_i : I\subset [n] \right\}.\]
Given two sequences of integers $\B{a}$ and $\B{b}$, we write $\S(\B{a}, \B{b})$ for the set of sums of the \emph{concatenation of $\B{a}$ and $\B{b}$}; equivalently, $\S(\B{a}, \B{b}) = \S(\B{a}) + \S(\B{b})$. We write $k \circ a$ to denote the sequence of length $k$ each of whose terms is $a$. So for example, the set $\S(k \circ a, l \circ b)$ consists of those integers which can be written as $ax + by$ for some $0 \le x \le k$ and $0 \le y \le l$.

Our conventions for asymptotic notation are largely standard; however, we feel obliged to point out that when we write, say $\Omega_k(.)$, we mean that the constant suppressed by the asymptotic notation is allowed to depend on (but is completely determined by) the parameter $k$. Occasionally, we shall find it convenient to switch to Vinogradov's notation: given functions $f$ and $g$, we write $f \ll g$ if $f = O(g)$ and $f \gg g$ if $g = O(f)$.

We use standard graph theoretic notation and refer the reader to~\citep{belabook1} for terms and notation not defined here.
To keep the exposition uncluttered, we omit floors and ceilings whenever they are not crucial.

\subsection{Number theoretic estimates} We now collect together a few easy number theoretic estimates; for the sake of completeness, we shall prove them.

\begin{lemma} \label{mult14-modulolemma}
Let $\B{a} = (a_i)_{i=1}^n$ be a sequence of positive integers and let $b \in \N$ be a positive integer such that $\gcd(a_i,b) \le g$ for $1 \le i \le n$. Then for any $k \in \N$,
\[ \S (\B{a}, k \circ b) \ge k\min\{b/g, n\}.\]
\end{lemma}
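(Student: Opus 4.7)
The plan is to reduce everything modulo $b$: the set $\mathcal{S}(\mathbf{a}, k \circ b)$ decomposes according to residues mod $b$, and on each residue class that $\mathcal{S}(\mathbf{a})$ hits, adjoining the $k$--term sequence $k \circ b$ produces an arithmetic progression of common difference $b$ and length $k+1$. Different residue classes contribute disjointly. Therefore the first step is to observe
\[ |\mathcal{S}(\mathbf{a}, k \circ b)| \;\ge\; (k+1)\,|R|, \qquad \text{where } R = \{s \bmod b : s \in \mathcal{S}(\mathbf{a})\} \subseteq \Z/b\Z. \]
It then suffices to show $|R| \ge \min\{b/g,\, n\}$, since $(k+1)\min\{b/g, n+1\} \ge k\min\{b/g, n\}$.

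The main (and only substantial) step is the lower bound on $|R|$. I would prove this by induction on the length of the sequence. Let $R_i$ denote the set of residues mod $b$ of subset sums of $(a_1,\dots,a_i)$, so $R_0 = \{0\}$ and $R_i = R_{i-1} \cup (a_i + R_{i-1})$ in $\Z/b\Z$. At each step we have the dichotomy: either $|R_i| \ge |R_{i-1}| + 1$, or else $a_i + R_{i-1} = R_{i-1}$, meaning that $R_{i-1}$ is invariant under translation by $a_i$ in $\Z/b\Z$, i.e.\ $R_{i-1}$ is a union of cosets of the cyclic subgroup $\langle a_i \rangle \le \Z/b\Z$.

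The key quantitative input is that $|\langle a_i \rangle| = b/\gcd(a_i,b) \ge b/g$ by hypothesis. Thus in the ``saturated'' case we immediately get $|R_{i-1}| \ge b/g$. So either saturation occurs at some step and $|R| \ge |R_{i-1}| \ge b/g$, or else $|R_i|$ strictly increases at every step and we arrive at $|R_n| \ge n+1$. In either case $|R| \ge \min\{b/g,\, n+1\}$, which finishes the proof.

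I do not foresee a real obstacle here: the only mild subtlety is keeping the off--by--one correct between $|R|$ and the final count, which is absorbed by the extra factor of $(k+1)$ versus $k$ in the product. The argument is purely elementary, using nothing beyond the cyclic--subgroup structure of $\Z/b\Z$ and the hypothesis $\gcd(a_i, b) \le g$.
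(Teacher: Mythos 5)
Your proposal is correct and follows essentially the same route as the paper: reduce modulo $b$, note that each residue hit by $\S(\B{a})$ contributes an arithmetic progression of length $k+1$ with difference $b$, and then show by induction on $n$ that the set of residues of subset sums has size at least $\min\{b/g,n\}$. The only difference is in how the non-growing case is handled: you observe that translation-invariance forces $R_{i-1}$ to be a union of cosets of $\langle a_i\rangle$, which has order $b/\gcd(a_i,b)\ge b/g$, whereas the paper argues the contrapositive by walking along $s, s+a_n,\dots,s+ta_n$ (distinct mod $b$ since $\gcd(a_n,b)\le g$) and taking the minimal exit point from $\S_b(\B{a}')$.
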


\begin{proof} Writing $\S_b (\B{a})$ for the set of residues modulo $b$ attained by the elements of $\S(\B{a})$, it is clearly sufficient to show that  $|\S_b (\B{a})| \ge \min\{b/g, n\}$; we shall prove this by induction on $n$.

The result is trivial if $n = 1$, so suppose that $n > 1$. Consider $\B{a}' = (a_i)_{i=1}^{n-1}$, and assume inductively that $|\S_b(\B{a}')| \ge \min\{b/g, n-1\}$. We are done if $|\S_b(\B{a}')| \ge n$ since then, $|\S_b(\B{a})| \ge |\S_b(\B{a}')| \ge n \ge \min\{b/g, n\}$. Also, if $n-1\ge b/g$, then we are done once again since $\min\{b/g, n\} = \min\{b/g, n-1\} = b/g$.

Hence, we may assume that $\S_b(\B{a}')$ contains exactly $n-1$ distinct residues modulo $b$ and also that $n-1 < b/g$. Let $t = \lceil b/g \rceil - 1$ and note that since $n-1 < b/g$, it is also true that $n-1 \le t$. Choose $s\in \S_b(\B{a}')$ and observe that the numbers
\[s, s+a_n, s+2a_n, \dots,s+t a_n\]
are all distinct modulo $b$ since $\gcd(a_n,b) \le g$. Also, as $|\S_b(\B{a}')| = n-1\le t$, one of
\[ s+a_n, s+2a_n, \dots,s+t a_n \]
is not in $\S_b(\B{a}')$ since these numbers are, modulo $b$, all distinct and distinct from $s \in \S_b(\B{a}')$. Now choose the minimal $1 \le l \le t$ such that $s+la_n\not\in \S_b(\B{a}')$. By the minimality of $l$, we have $s+(l-1)a_n\in \S_b(\B{a}')$, so $s+(l-1)a_n + a_n = s+la_n\in \S_b(\B{a})$. Consequently, $s+la_n\in \S_b(\B{a})\setminus \S_b(\B{a}')$ and we are done.
\end{proof}

Let us record here, for convenience, a special case of Lemma~\ref{mult14-modulolemma}
\begin{lemma}\label{mult14-numtheo}
Let $a,b\in \N$ be positive integers such that $\gcd(a,b) \le g$. Then for any $k, l \in \N$,
\[|\S(k \circ a, l \circ b)|\ge k\min\{ a/g ,l\}. \eqno\qed\]
\end{lemma}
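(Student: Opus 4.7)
The plan is to obtain Lemma~\ref{mult14-numtheo} as an immediate specialisation of Lemma~\ref{mult14-modulolemma}. The key observation is that the set of sums $\S(\cdot,\cdot)$ depends only on the multiset of values being summed (since integer addition is commutative), so $\S(k\circ a,\, l\circ b) = \S(l\circ b,\, k\circ a)$. With this in mind, we apply Lemma~\ref{mult14-modulolemma} with the variable sequence taken to be $\B{a} := l\circ b$ (so $n=l$) and the single integer $b$ of that lemma taken to be our $a$ (with multiplicity $k$). The hypothesis $\gcd(a_i,b)\le g$ of the general lemma becomes $\gcd(b,a)\le g$, which is exactly our assumption, and its conclusion $k\min\{b/g,n\}$ becomes $k\min\{a/g,l\}$, as required.

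The only subtlety worth calling out is the asymmetry between $a$ and $b$ in the two statements: Lemma~\ref{mult14-modulolemma} gives a bound in terms of the modulus divided by $g$ and the length of the variable sequence, whereas Lemma~\ref{mult14-numtheo} asks for a bound in terms of $a/g$ and $l$. This forces the assignment above rather than the opposite one (which would yield $l\min\{b/g,k\}$ — a different, generally weaker bound in the regimes of interest). Once this matching is made correctly, there is nothing further to prove; no obstacle remains beyond selecting the right substitution.
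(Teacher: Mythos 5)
Your proposal is correct and takes exactly the paper's route: the paper records Lemma~\ref{mult14-numtheo} as an immediate special case of Lemma~\ref{mult14-modulolemma}, and your substitution (variable sequence $l\circ b$ of length $n=l$, with the repeated modulus element of that lemma taken to be $a$ with multiplicity $k$, using commutativity of the set of sums) is precisely the right specialisation, yielding the bound $k\min\{a/g,l\}$. Nothing further is needed.
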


We need the following easy consequence of the prime number theorem; see~\citep{pntbook}, for example.
\begin{proposition}\label{mult14-pnt}
For every $\eps > 0$, the number of primes in the interval $[n,(1+\eps)n]$ is $\Omega_\eps(n/\log n)$ as $n \to \infty$. \hfill \qed
\end{proposition}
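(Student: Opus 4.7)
The plan is to invoke the prime number theorem (PNT) directly, since Proposition~\ref{mult14-pnt} is an essentially immediate consequence. Writing $\pi(x)$ for the number of primes at most $x$, PNT asserts that $\pi(x) = (1+o(1))\, x/\log x$ as $x \to \infty$, so the quantity of interest, $\pi((1+\eps)n) - \pi(n-1)$, can be expanded by applying PNT at both endpoints.

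The key calculation is to observe that $\log((1+\eps)n) = \log n + \log(1+\eps) = (\log n)\bigl(1 + O_\eps(1/\log n)\bigr)$. Substituting this into the asymptotics of $\pi$ gives
\[ \pi((1+\eps)n) = \frac{(1+\eps)n}{\log n}\bigl(1 + o_\eps(1)\bigr) \qquad \text{and} \qquad \pi(n-1) = \frac{n}{\log n}\bigl(1 + o(1)\bigr), \]
and subtracting yields $\pi((1+\eps)n) - \pi(n-1) = \bigl(\eps + o_\eps(1)\bigr)\cdot n/\log n$. For all $n$ larger than some threshold $n_0(\eps)$ this is at least $\eps n/(2\log n) = \Omega_\eps(n/\log n)$; the finitely many smaller values of $n$ can be absorbed into the implicit constant, and in any case the statement is asymptotic.

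There is no substantive obstacle: the proposition is stated as an asymptotic precisely so that one may cite PNT as a black box. If one preferred to avoid the full strength of PNT, Chebyshev-type bounds on $\pi(x)$ combined with the fact that the window $[n,(1+\eps)n]$ captures a positive proportion of the logarithmic mass of the primes up to $(1+\eps)n$ would yield a sufficient (though quantitatively weaker) bound; but using PNT is the shortest route and costs nothing here.
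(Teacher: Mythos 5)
Your proposal is correct and matches the paper's treatment: the paper simply cites the prime number theorem (noting that the count is in fact asymptotic to $\eps n/\log n$) and records the weaker $\Omega_\eps(n/\log n)$ bound without further argument. Your expansion of $\pi((1+\eps)n)-\pi(n-1)$ via $\log((1+\eps)n)=\log n+\log(1+\eps)$ is exactly the routine calculation being suppressed.
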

Indeed, it follows from the prime number theorem that the number of primes in the interval $[n,(1+\eps)n]$ is asymptotic to $\eps n/\log n$ for any fixed $\eps > 0$. However, the weaker estimate above will be sufficient for our purposes.

\begin{lemma}\label{mult14-primes}
If $A \subset \N$ is a set of positive integers and $b \in \N$ is a positive integer such that $ \max{A} < b^{3}/16$, then
\[ \big\lvert A \cdot [b] \big\rvert \gg  \frac{|A|b}{\log b} .\]
\end{lemma}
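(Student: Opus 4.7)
The plan is to reduce to multiplying $A$ by primes rather than by all of $[b]$. Let $P$ denote the set of primes lying in the interval $[b/2, b]$. By Proposition~\ref{mult14-pnt} (applied with $\eps = 1$ and $n = b/2$), we have $|P| = \Omega(b/\log b)$. Since $A \cdot P \subseteq A \cdot [b]$, it suffices to prove that $|A \cdot P| \gg |A|\,|P|$, and the conclusion will follow.

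To bound the product set $A \cdot P$, I plan a standard Cauchy--Schwarz collision count, exploiting the hypothesis $\max A < b^3/16$ to control prime divisor multiplicities. The key observation is that every $a \in A$ has at most three prime divisors lying in $[b/2, b]$: if $a$ had four such divisors $p_1 < p_2 < p_3 < p_4$, then $a \ge (b/2)^4 = b^4/16$, contradicting the assumed upper bound on $a$. Now define $r(x) = |\{(a,p) \in A \times P : ap = x\}|$, so that $\sum_x r(x) = |A|\,|P|$ and, by Cauchy--Schwarz,
\[
|A \cdot P| \ge \frac{(|A|\,|P|)^2}{\sum_x r(x)^2}.
\]
To control $\sum_x r(x)^2$, fix a pair $(a,p)\in A\times P$ and count the pairs $(a',p')\in A\times P$ with $a'p' = ap$. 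Either $p' = p$, which forces $a' = a$, giving exactly one such pair; or $p' \neq p$, in which case $p' \mid ap$ together with primality forces $p' \mid a$, so $p'$ is one of at most three prime divisors of $a$ in $P$, and then $a' = ap/p'$ is determined. Summing, $\sum_x r(x)^2 \le 4|A|\,|P|$, so $|A\cdot P| \ge |A|\,|P|/4 \gg |A|b/\log b$, as required.

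The potentially tricky step is justifying that collisions in $A \cdot P$ are scarce, and the hypothesis $\max A < b^3/16$ is designed precisely for this: it turns the density of primes in $[b/2,b]$ into an honest lower bound via the $O(1)$-divisor cap above. Any looser control on $\max A$ relative to the prime interval would break the collision count. The rest of the argument is routine, with the prime number theorem (in the form of Proposition~\ref{mult14-pnt}) supplying the logarithmic factor that appears in the conclusion.
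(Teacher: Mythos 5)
Your proof is correct and follows essentially the same route as the paper: restrict to products $ap$ with $p$ prime in $[b/2,b]$, invoke Proposition~\ref{mult14-pnt}, and use the hypothesis $\max A < b^3/16$ to cap the number of such prime divisors and hence the multiplicity of representations. The only cosmetic difference is that you run a Cauchy--Schwarz collision count, whereas the paper bounds the number of representations of each product directly (making Cauchy--Schwarz unnecessary); the substance is identical.
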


\begin{proof} Let $P_b$ denote the set of primes in the interval $[b/2, b]$; we know that $|P_b| \gg b/\log b$ by Proposition~\ref{mult14-pnt}. Note that any $k \in A \cdot [b]$ has at most three distinct prime factors in $P_b$. Indeed, if not, then $k$ is divisible by at least four distinct primes each of which is at least $b/2$, whence $k \ge b^4/16$, which contradicts the fact that $k \le b \max A < b^4/16$. Consequently, the number of distinct ordered pairs $(a,p) \in A \times P_b$ such that $ap = k$ is at most $3 \times 2 = 6$. Hence,
\[ \big\lvert A \times [b] \big\rvert \ge \frac{|A||P_b|}{6} \gg  \frac{|A|b}{\log b} . \qedhere\]
\end{proof}

\begin{lemma}\label{mult14-gcdlemma}
Let $a,b, d, k \in \N$ be positive integers. Then
\[ \min_{0\le i< k} \gcd(a,b-id) \ll a^{1/k} d k^2.\]
Furthermore, if $a\neq b$, then
\[ \min_{0\le i< k} \gcd(a-id,b-id)\ll |a-b|^{1/k} d k^2.\]
\end{lemma}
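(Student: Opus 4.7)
\textbf{Reduction of the second inequality to the first.} Observe that $\gcd(a-id, b-id)$ divides $(a-id)-(b-id) = a-b$, so $\gcd(a-id, b-id) = \gcd(a-b, a-id)$. The sequence $\{a-id\}$ is itself an arithmetic progression with common difference $d$, so the first inequality applied with $a$ replaced by $|a-b|$ and $b$ replaced by $a$ delivers the second.

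\textbf{Strategy for the first inequality.} Write $g_i = \gcd(a, b-id)$ for $0 \le i < k$, and let $g = \min_i g_i$. The fundamental observation is a pairwise constraint: for $i \neq j$, both $g_i$ and $g_j$ divide $(b-id)-(b-jd)=(j-i)d$, so
\[
\gcd(g_i, g_j) \mid (j-i) d.
\]
We extract consequences of this prime-by-prime. Fix a prime $p$, set $\alpha_p = v_p(a)$ and consider the level sets $I_s = \{i \in [0,k-1] : v_p(g_i) \ge s\}$. If $i, j \in I_s$ then $p^s \mid \gcd(g_i, g_j) \mid (j-i)d$, so $p^{\max(0, s-v_p(d))}$ divides $j-i$. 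Consequently $I_s$ lies in an arithmetic progression of common difference $p^{\max(0,s-v_p(d))}$, and hence $|I_s| \le \lfloor (k-1)/p^{\max(0,s-v_p(d))}\rfloor + 1$.

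\textbf{Assembling the bound.} Writing $\sum_i v_p(g_i) = \sum_{s\ge 1} |I_s|$ and splitting according to whether $s \le v_p(d)$ or $s > v_p(d)$, a straightforward geometric-series calculation yields
\[
\sum_i v_p(g_i) \;\le\; \alpha_p \;+\; k\, v_p(d) \;+\; O\bigl(k/(p-1)\bigr).
\]
Multiplying the contributions over all primes (only primes dividing $a$ can contribute to $\sum_i v_p(g_i)$),
\[
\prod_{i=0}^{k-1} g_i \;\le\; a \cdot d^{k} \cdot \exp\!\left(O\!\left(k \sum_{p\mid a} \tfrac{\log p}{p-1}\right)\right),
\]
and the latter sum is bounded using Mertens-type estimates for the primes that are truly relevant. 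Since $g^k \le \prod_i g_i$, taking $k$-th roots yields the desired estimate
\[
g \;\ll\; a^{1/k}\, d\, k^{2}.
\]

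\textbf{Main obstacle.} The delicate step is Step~3, controlling the aggregate multiplicative contribution across all primes so that the polynomial factor $k^2$ (rather than something like $\mathrm{lcm}(1,\ldots,k) \sim e^{k}$) suffices. The efficient prime-by-prime bookkeeping above, which separates the ``at most one exceptional index'' phenomenon from the geometric tail, is what keeps this factor polynomial in $k$.
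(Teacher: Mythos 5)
Your overall strategy is sound and is essentially the paper's own argument in different clothing: both proofs rest on the pairwise observation $\gcd(g_i,g_j)\mid (j-i)d$, bound the product $\prod_{0\le i<k} g_i$ prime by prime, and invoke Mertens; your reduction of the second inequality to the first via $\gcd(a-id,b-id)=\gcd(a-b,a-id)$ is also exactly the paper's. However, as written your aggregation step has a genuine gap. The displayed bound $\prod_i g_i \le a\, d^k \exp\bigl(O\bigl(k\sum_{p\mid a}\tfrac{\log p}{p-1}\bigr)\bigr)$ is true, but it does not imply the lemma: the sum $\sum_{p\mid a}\tfrac{\log p}{p-1}$ is controlled by $a$, not by $k$ (for $a$ a primorial it is of order $\log\log a$), so after taking $k$-th roots you only obtain $g\ll a^{1/k}\,d\,(\log a)^{O(1)}$, which is not the claimed uniform bound $a^{1/k}dk^2$; moreover Mertens' theorem bounds sums over initial segments of the primes, not over the prime divisors of $a$, so the appeal to ``Mertens-type estimates for the primes that are truly relevant'' is not actually justified by what you have written.

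The fix is already latent in your own level-set estimate: the term $\lfloor (k-1)/p^{\max(0,\,s-v_p(d))}\rfloor$ vanishes as soon as $p^{\,s-v_p(d)}>k-1$, and in particular for every prime $p\ge k$; for such primes the correct per-prime bound is simply $\sum_i v_p(g_i)\le \alpha_p + k\,v_p(d)$, with no $k/(p-1)$ penalty. Hence the exponential factor should be $\exp\bigl(k\sum_{p<k}\tfrac{\log p}{p-1}\bigr)\le k^{k}e^{O(k)}$ by Mertens, which after taking $k$-th roots gives $g\ll a^{1/k}dk$, even slightly better than the stated $k^2$. This restriction to small primes is exactly where the polynomial-in-$k$ factor is won (the paper makes it explicit by showing $\prod_{i\ne j}(i-j)$ is divisible by each prime power $q<k$ to multiplicity at most $\lceil k/q\rceil$), so it must be stated rather than left implicit; with that correction your proof is complete and parallels the paper's double-counting argument, differing only in bookkeeping (valuation level sets versus the paper's choice of a maximal index $j$ with $f_j\mid a$ and the divisor bound $\gcd(f_i,f_j)\mid d(i-j)$).
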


\begin{proof} Let $g=\gcd(a,b,d)$ and $f_{i}=\gcd(a,b-id)$. We claim that
\[ \prod_{i = 0}^{k-1} f_i \mid a d^{k} \prod_{q<k}q^{\lceil k/q\rceil},\]
where in the above, $q$ ranges over the set of prime powers less than $k$. To check this claim, consider any prime $p$: it suffices to show that the largest power of $p$ dividing $\prod_{i = 0}^{k-1} f_i$ also divides $ad^{k} \prod_{q<k}q^{\lceil k/q\rceil}$. Given $p$, fix $0 \le j \le k-1$ so that the largest power of $p$ dividing $f_j$ is the greatest amongst $f_0, \dots, f_{k-1}$; since $f_j \mid a$, we have accounted for the contribution from $f_j$. Next, note that the largest power of $p$ dividing $f_i$ for any $i \neq j$ is the same as the largest power of $p$ dividing $\gcd(f_i, f_j)$. Observe that
\[ \gcd(f_{i},f_{j}) = \gcd(a, b-id, b-jd) =\gcd(a, b-id, (i-j)d),\]
whence it is clear that
\[ \gcd(f_{i},f_{j}) \mid \gcd(a, b-id, d) \gcd(a, b-id, i-j) \mid g (i-j) \mid d(i-j).\]
Consequently, the largest power of $p$ dividing $\prod_{i \neq j} f_i$ also divides $d^{k-1} \prod_{i \neq j} (i-j)$. It suffices to account for the largest power of $p$ dividing $\prod_{i \neq j} (i-j)$. But note that for any prime power $q<k$, the number of indices $i \neq j$ such that $q \mid (i-j)$ is at most $\lceil k/q\rceil$; the claim then follows.

An old result of Mertens asserts that
\[ \sum_{p<k} \frac{\log p} { p} = (1+o(1))\log k,\]
where in the above, $p$ ranges over the set of primes less than $k$. It follows that
\[ \prod_{q<k}q^{\lceil k/q\rceil} = k^{k+o(k)} \ll k^{2k}.\]
Hence, there exists an $ i\le k-1$ such that
\[ f_{i}\le a^{1/k}d\left(\prod_{q<k}q^{\lceil k/q\rceil}\right)^{1/k} \ll a^{1/k}dk^2. \]

To finish the proof of the lemma, note that the second assertion follows from the first since $\gcd(a-id,b-id) = \gcd(a-b,b-id)$.
\end{proof}

\section{Overview of our approach}\label{mult14-s:strat}
To illustrate our approach, we make an easy observation. Let $G = (X,Y;E)$ be a bipartite graph and suppose that both vertex classes of $G$ have the same size. Assume that $G$ is \emph{half-regular}; in other words, assume that the vertices on one side of the bipartition, say $X$, all have the same degree $d$. Assume also that there exists a vertex $v \in Y$ with $ n/\log n < \Deg(v) < n - n/\log{n}$, where $n = |X| = |Y|$. If we remove $v$ from $G$, then each vertex of $X$ has degree either $d$ or $d-1$ in the resulting graph; moreover, there are at least $n/\log{n}$ vertices with each of these two degrees. By considering induced subgraphs of the form $G[X' \cup Y \setminus \{ v \}]$ where $X' \subset X$, we see that $\S(k\circ d, l \circ (d-1)) \subset \M(G)$ for some pair of natural numbers $k, l \ge n/ \log n$. Since $d$ and $d-1$ are coprime, it follows from Lemma~\ref{mult14-numtheo} that
\begin{align*}
|\M(G)| &\ge | \S(k\circ d, l \circ (d-1)) | \\
&\ge \frac{n}{\log{n}} \min \left\{d,\frac{n}{\log n} \right\} \\
&\ge \frac{nd}{(\log n)^2} \ge \frac{m}{\l_p[2]}.
\end{align*}

To prove Theorem~\ref{mult14-t:main}, we first reduce the problem of bounding the size of the multiplication table of a general bipartite graph to the situation above, namely that of bounding the size of the multiplication table of a half-regular bipartite graph. We shall show (see Lemma~\ref{mult14-largepartition}) that given any bipartite graph $G=(X,Y;E)$, it is possible find a reasonably large subset of $X$ that can be partitioned into many groups $X_1, \dots ,X_k$ such that the sums of the degrees of the vertices in each of these groups is the same. If we now imagine contracting each such set $X_i$ into a single vertex, we obtain, after discarding $X \setminus (\bigcup_{i}X_i)$ from our graph, a half-regular bipartite \emph{multigraph}.

If we can then find a vertex in $Y$ whose degree is neither too large nor too small, then we finish the proof using a variant of the argument sketched above. However, it might be the case that there is no such vertex in $Y$. If $Y$ contains only a few vertices of very large degree, then our graph is somewhat sparse, and in this case, we use Lemma~\ref{mult14-graphlemma} to complete the proof. If it turns out that many vertices of $Y$ have very large degrees, then we show that there is a reasonably dense induced subgraph within which we can find many distinct subgraph sizes; the argument in this case is number theoretic in nature.

\section{A partitioning lemma}\label{mult14-s:part}
In this section, we shall prove a partitioning lemma for multisets of positive integers that will play an important part in the proof of the main result. This lemma allows us to select a large proportion of the vertices of a vertex class of a bipartite graph and partition the selected vertices into many small, disjoint sets of equal size and such that the sum of the degrees of the vertices in each of these sets is the same.

\begin{lemma}\label{mult14-largepartition}
Let $\B{a} = (a_i)_{i=1}^n$ be a sequence of positive integers with $\sum_{i=1}^na_i = m$. There exist positive integers $k, r, d \in \N$ and pairwise disjoint sets $I_{1},\dots,I_{k} \in [n]^{(r)}$ such that
\begin{enumerate}
\item $1\le r \le \log m$,
\item $kd \ge m/\l_p[3]$, and
\item $\sum_{i\in I_{j}}a_{i} = d$ for each $1\le j \le k$.
\end{enumerate}
\end{lemma}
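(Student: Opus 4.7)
The plan is to execute a three-fold dyadic pigeonhole followed by a ``rainbow subset'' construction. First I would bucket the sequence by the dyadic range of its values: set $B_t = \{i : a_i \in [2^t, 2^{t+1})\}$ for $0 \le t \le \lceil \log_2 m \rceil$, and pick $B = B_{t^*}$ with $\sum_{i \in B} a_i \ge m/O(\log m)$ by averaging. If $2^{t^*} \ge m/(\log m)^3$, any single index in $B$ already yields $k = r = 1$ with $d = a_i \ge m/(\log m)^3$, so I may assume $2^{t^*} < m/(\log m)^3$.

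Next I would bucket within $B$ by multiplicity: let $M(v) = |\{i \in B : a_i = v\}|$ for each distinct value $v$, and group these values into the dyadic classes $V_j = \{v : M(v) \in [2^j, 2^{j+1})\}$. A second averaging over the $O(\log m)$ possible $j$ produces a class $V = V_{j^*}$ with $\sum_{v \in V} M(v) v \ge m/O((\log m)^2)$. Writing $K = |V|$, this forces $K \cdot 2^{t^* + j^*} \gg m/(\log m)^2$, and the key bonus is that every $v \in V$ has at least $2^{j^*}$ copies in $\B{a}$. This makes available the \emph{rainbow} construction: for any $r$ distinct values $v_1, \dots, v_r \in V$ I can form $2^{j^*}$ pairwise disjoint index-subsets, each of size $r$ and each with sum $v_1 + \cdots + v_r$, by taking the $\ell$-th occurrence of each $v_i$ for $\ell = 1, \dots, 2^{j^*}$.

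I would then split into cases based on $K$. When $K \le \log m$, take $r = K$ using all of $V$: this gives $k = 2^{j^*}$ and $d = \sum_{v \in V} v \ge K \cdot 2^{t^*}$, whence $kd \ge K \cdot 2^{t^* + j^*} \gg m/(\log m)^2$, comfortably beating the target. When $K > \log m$, I partition $V$ into $\lfloor K/\log m \rfloor$ disjoint groups of $\log m$ distinct values each and apply a third pigeonhole to the group sums, which lie in an interval of length $\log m \cdot 2^{t^*}$, to extract a target sum $d$ hit by many groups; each such group contributes $2^{j^*}$ rainbow index-subsets of size $r = \log m$, and groups from different value-partitions automatically yield disjoint index-subsets.

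The main obstacle is the second case of the last step: the pigeonhole on group sums costs a factor of $2^{t^*}$, which is harmless when $2^{t^*}$ is polylogarithmic but damaging in the intermediate regime where $2^{t^*}$ is polynomially large yet still below $m/(\log m)^3$. I would close this gap either by inserting an extra dyadic layer based on the offset $v - 2^{t^*}$ (to narrow the range over which sums vary), or by a pair-sum argument inside $V$: since for distinct values the pairs summing to a fixed target form a matching, pigeonhole on the $\binom{K}{2}$ pair sums produces $\gg K^2/2^{t^*}$ disjoint value-pairs with a common sum, each giving $2^{j^*}$ disjoint rainbow index-pairs and hence $kd \gg 2^{j^*} K^2$. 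This pair-sum bound dominates precisely when $K$ is large, which is the regime where the group-partition bound is weakest, so together they cover all parameter ranges.
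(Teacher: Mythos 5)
There is a genuine gap, and it sits exactly where you flagged it: neither of your two proposed patches closes the intermediate regime. After your two dyadic selections you have a class of $K$ distinct values, all of size about $2^{t^*}$, each with multiplicity about $2^{j^*}$, and total weight $W = 2^{j^*}K2^{t^*} \gg m/(\log m)^2$. Your tools then give: (i) single value or whole-class rainbow, $kd \gg W/K$, useful only when $K$ is polylogarithmic; (ii) group-partition pigeonhole, $kd \gg W/(2^{t^*}\log m)$, useful only when $2^{t^*}$ is polylogarithmic; (iii) pair-sum pigeonhole, $kd \gg WK/2^{t^*}$, useful only when $K \gg 2^{t^*}/\mathrm{polylog}(m)$. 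Nothing forces the input into one of these ranges: take, say, $K \approx e^{\sqrt{\log m}}$ distinct values near $\sqrt{m}$ forming a Sidon set (or generic values), each with multiplicity $\sqrt{m}/K$. Then $K$ and $2^{t^*}/K$ are both super-polylogarithmic; the pair sums are essentially all distinct and there are far fewer candidate pairs (and $\log m$-element groups of distinct values) than possible sums, so (ii) and (iii) return a single group or pair and every bound above tops out at about $m/e^{\sqrt{\log m}}$ or $m^{1/2+o(1)}$, far below $m/(\log m)^3$. The offset-refinement fix does not help either: restricting to a dyadic offset class only replaces $2^{t^*}$ in (ii) by the new spread $2^u$, which the adversary can keep comparable to $2^{t^*}$, and iterating the refinement until the spread is polylogarithmic costs a factor $\log m$ in retained weight per halving of the spread, i.e.\ $(\log m)^{\Theta(\log m)}$ in total.

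The missing idea is that one must pigeonhole over \emph{all} $\binom{n}{r}$ index subsets of size $r$ with $r \approx \log m$, not over $O(K/\log m)$ prefabricated value-groups or $O(K^2)$ pairs: since sums lie in $[m]$ and $m^{1/r} = O(1)$, some target sum $d$ is attained by at least $\binom{n}{r}/m \ge (n/r)^r/m$ subsets, an enormous number, and in the Sidon-type example above these collisions come from overlapping, repeated-value multisets that your rainbow construction never sees. The paper (Propositions~\ref{mult14-p:match} and~\ref{mult14-mainlemma}) converts this huge edge count in the hypergraph $\C{F}(r,d)$ into a large \emph{matching} by an induction: either some index lies in many equal-sum sets, in which case one passes to $r-1$ and $d-a_i$, or no index does, in which case any maximal matching is already large; a final greedy iteration (covering all indices by at most $(\log m)^3$ such matchings) upgrades ``many disjoint equal-sum sets'' to ``disjoint equal-sum sets covering weight $m/(\log m)^3$''. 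Your cases where $K \le \log m$ or $2^{t^*} \ge m/(\log m)^3$ are fine, but without an ingredient of this collision-plus-matching type the argument cannot be completed in the remaining regime.
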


Let us fix, for the rest of this section, $a_1, \dots, a_n$ and $m = \sum_{i=1}^na_i$. For $i \in [n]$, we shall think of $a_i$ as the \emph{weight on $i$}, and given a hypergraph $\C{F}$ on $[n]$, we define the \emph{weight covered by $\C{F}$} to be the sum of the weights of those vertices contained in at least one edge of $\C{F}$. Given $r,d \in \N$, it is natural to consider the $r$-uniform hypergraph $\C{F}(r,d)$ on $[n]$ whose edges are the sets $I \in [n]^{(r)}$ with $ \sum_{i \in I} a_i = d$. In this language, Lemma~\ref{mult14-largepartition} tells us that we can find appropriate $r, d \in \N$ so that the hypergraph $\C{F}(r,d)$ contains a matching, i.e., a set of independent edges, covering a $1/\l_p[3]$ proportion of the total weight, namely $m$.

Our first two propositions together show that one of the hypergraphs $\C{F}(r,d)$ must necessarily contain a matching with many edges; using this, we then show that one of these large matchings must cover a $1/\l_p[3]$ proportion of the total weight.

\begin{proposition}\label{mult14-p:match}
If $\C{F}(r, d)$ contains $l$ edges, then there exist positive integers $ r' \le r$ and  $d' \le d$ for which $\C{F}(r',d')$ contains a matching of size at least $l^{1/r}/r$.
\end{proposition}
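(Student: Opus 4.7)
The plan is to prove this by induction on $r$, exploiting the standard dichotomy between a graph having a large matching and having a small vertex cover.

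The base case $r=1$ is immediate: the $l$ edges of $\C{F}(1,d)$ are pairwise disjoint singletons, so they themselves form a matching of size $l = l^{1/1}/1$.

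For the inductive step, consider $\C{F}(r,d)$ and suppose its maximum matching has size $t$. If $t \ge l^{1/r}/r$ we are done with $r'=r$, $d'=d$. Otherwise, the set $S$ of vertices covered by a maximum matching satisfies $|S| = rt < l^{1/r}$ and intersects every edge of $\C{F}(r,d)$. By averaging, some vertex $v \in S$ lies in at least
\[ \frac{l}{|S|} > \frac{l}{l^{1/r}} = l^{(r-1)/r} \]
edges of $\C{F}(r,d)$. Deleting $v$ from each of these edges yields a collection of at least $l^{(r-1)/r}$ sets of size $r-1$, each summing to $d - a_v$; that is, $\C{F}(r-1,\, d - a_v)$ contains at least $l^{(r-1)/r}$ edges. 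Applying the inductive hypothesis with parameter $r-1$, we obtain $r' \le r-1 \le r$ and $d' \le d - a_v < d$ such that $\C{F}(r',d')$ contains a matching of size at least
\[ \frac{(l^{(r-1)/r})^{1/(r-1)}}{r-1} = \frac{l^{1/r}}{r-1} \ge \frac{l^{1/r}}{r}, \]
which completes the induction.

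The main thing to double-check is that the recursion does not force $d'$ or $r'$ outside the allowed range; both checks are immediate because $a_v \ge 1$ and the induction descends strictly in $r$, and the arithmetic on the matching bound leaves a tiny slack (the factor $1/(r-1)$ versus $1/r$) that accommodates rounding of $|S|$. I don't anticipate any serious obstacle: this is a clean cover-versus-matching dichotomy with the uniformity controlling how much probability mass the covered vertex inherits.
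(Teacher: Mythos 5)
Your proof is correct and is essentially the paper's argument: the same induction on $r$ with the same dichotomy and the same quantities $l^{(r-1)/r}$ and $l^{1/r}/r$. The only cosmetic difference is that you extract the high-degree vertex by averaging over the (small) vertex set of a maximum matching, whereas the paper case-splits directly on whether some vertex lies in at least $l^{(r-1)/r}$ edges and, if not, bounds the size of any maximal matching by a degree count.
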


\begin{proof} We prove the claim by induction on $r$. If $r=1$, then it clearly suffices to take $r' = r = 1$ and $d' = d$. Now suppose that $r>1$. If some $i \in [n]$ is contained in at least $l^{(r-1)/r}$ edges of $\C{F}(r,d)$, we proceed inductively as follows. By looking at the edges of $\C{F}(r,d)$ containing $i$, we see that $\C{F}(r-1,d-a_i)$ has at least $l^{(r-1)/r}$ edges. From the inductive hypothesis applied to $\C{F}(r-1,d-a_i)$, we conclude that there exist $r' \le r-1 < r$ and $d' \le d-a_i < d$ such that $\C{F}(r',d')$ contains a matching of size at least
\[ \frac{\left(l^{(r-1)/r}\right)^{1/r-1}}{r-1} = \frac{l^{1/r}}{r-1} \ge \frac{l^{1/r}}{r}.\]

Now suppose that each $i \in [n]$ is contained in at most $l^{(r-1)/r}$ edges of $\C{F}(r,d)$. In this case, we claim that $\C{F}(r,d)$ contains a matching of size $l^{1/r}/r$; in fact, we claim that any maximal matching of $\C{F}(r,d)$ contains at least $l^{1/r}/r$ edges. Indeed, if $\C{M}$ is a maximal matching of $\C{F}(r,d)$, then each edge of $\C{F}(r,d)$ meets at least one edge of $\C{M}$. However, each edge of $\C{M}$ meets at most $rl^{(r-1)/r}$ edges of $\C{F}(r,d)$. It follows that the number of edges in $\C{M}$ is at least $l / rl^{(r-1)/r}  =  l^{1/r}/r$.
\end{proof}

\begin{proposition}\label{mult14-mainlemma}
For any $r \in \N$, there exist positive integers $1 \le r' \le r$ and $d' \in \mathbb{N}$ for which $\C{F}(r',d')$ contains a matching of size at least $n/(r^2 m^{1/r})$.
\end{proposition}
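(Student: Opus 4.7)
The plan is to combine a simple pigeonhole argument with Proposition~\ref{mult14-p:match}. The latter converts an edge-count lower bound on some $\C{F}(r,d)$ into a matching in some $\C{F}(r',d')$, so the task reduces to producing, for some $d$, many $r$-subsets whose weights sum to $d$.

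First I would count: the total number of $r$-subsets of $[n]$ is $\binom{n}{r}$, and for every $I \in [n]^{(r)}$ the sum $\sum_{i \in I}a_i$ is a positive integer lying in the range $\{r, r+1, \dots, m\}$, since all $a_i$ are positive and $\sum_{i=1}^n a_i = m$. Thus there are at most $m$ possible values for the sum, and by pigeonhole we can find some integer $d$ with
\[ |\C{F}(r,d)| \ge \binom{n}{r}/m. \]

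Next I would feed this into Proposition~\ref{mult14-p:match} with $l = \binom{n}{r}/m$: this yields integers $1 \le r' \le r$ and $d' \le d$ such that $\C{F}(r',d')$ contains a matching of size at least
\[ \frac{l^{1/r}}{r} \;=\; \frac{\binom{n}{r}^{1/r}}{r\, m^{1/r}}. \]
Finally I would use the standard estimate $\binom{n}{r} \ge (n/r)^r$ (valid whenever $n \ge r$) to bound $\binom{n}{r}^{1/r} \ge n/r$, giving a matching of size at least $n/(r^2 m^{1/r})$, which is exactly the claim. The boundary case $n < r$ is harmless: then $n/(r^2 m^{1/r}) < 1/(r\,m^{1/r}) \le 1$, and we can exhibit a trivial matching of size $1$ by taking $r' = 1$ together with any $d' = a_i$.

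I do not expect a genuine obstacle in this proof; once Proposition~\ref{mult14-p:match} is in hand the statement follows essentially from pigeonholing sums into at most $m$ residue classes and taking $r$th roots. The only point requiring any care is the elementary inequality $\binom{n}{r} \ge (n/r)^r$ (to absorb all the $r$-dependent factors correctly) and the verification that this is compatible with the constraint $d' \le d \le m$ inherited from Proposition~\ref{mult14-p:match}, so that the resulting edge of $\C{F}(r',d')$ is well defined.
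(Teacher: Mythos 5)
Your proposal is correct and coincides with the paper's proof: the same pigeonhole over the at most $m$ possible values of $\sum_{i\in I}a_i$, followed by Proposition~\ref{mult14-p:match} and the bound $\binom{n}{r}\ge (n/r)^r$. The only difference is your explicit treatment of the degenerate case $n<r$, which the paper leaves implicit.
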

\begin{proof}
As $a_1, \dots, a_n$ are positive integers whose sum is $m$, it is clear that $\sum_{i \in I}a_i \in [m]$ for all $I\in [n]^{(r)}$. Consequently, there exists a $d \in [m]$ for which $\C{F}(r,d)$ contains at least $\binom{n}{r}/m$ edges. It follows from Proposition~\ref{mult14-p:match} that there exist positive integers $r' \le r$ and $d' \le d$ for which $\C{F}(r',d')$ contains a matching of size at least
\[ \frac{1}{r} \left(\frac{\binom{n}{r}}{m} \right)^{1/r} \ge \frac{1}{r} \left(\frac{n^r}{r^rm} \right)^{1/r} = \frac{n}{r^2m^{1/r}}. \qedhere\]
\end{proof}

We are now ready to prove Lemma~\ref{mult14-largepartition}.

\begin{proof}[Proof of Lemma~\ref{mult14-largepartition}]
To prove the lemma, we greedily apply Proposition~\ref{mult14-mainlemma} so as to cover $[n]$ with $\l_p[3]$ matchings, where each matching is from one of the hypergraphs $\C{F}(r,d)$. We can then conclude that one of these matchings covers a $1 / \l_p[3]$ proportion of the total weight, whence follows the lemma.

Define a collection of sets $A_{1} \supset A_{2}\supset  \dots $ recursively as follows. First set $A_1 = [n]$. Assume that we have defined $A_{t} \subset [n]$, the set of uncovered points after $t$ steps. We know from Proposition~\ref{mult14-mainlemma} that we can find $1\le r_t \le \log m$, $k_{t} \ge |A_{t}|/\l_p[2]$ and pairwise disjoint sets $I_{t,1},\dots,I_{t,k_{t}}\in A_{t}^{(r_{t})}$ such that, for some $d_t \in \N$, $\sum_{i\in I_{t,j}}a_{i} = d_{t}$ for each $1 \le j \le k_{t}$. Now define
\[ A_{t+1}=A_{t}\setminus \bigcup_{j=1}^{k_{t}}I_{t,j}. \]

We claim that it takes at most $\l_p[3]$ steps before we cover all the elements of $[n]$. To see this, simply note that at stage $t+1$, the number of elements we remove from $A_t$ to form $A_{t+1}$ is $k_tr_t \ge k_t \ge |A_{t}|/\l_p[2]$ and hence,
\[ |A_{t+1}| \le \left(1 - \frac{1}{\l_p[2]} \right)^tn < e^{ -t/\l_p[2]}n.\]

Since $n \le m$, it follows that $A_{\l_p[3]+1} = \emptyset$. It follows that there exists a $t \le \l_p[3]$ for which the weight covered by the matching $I_{t,1},\dots,I_{t,k_{t}}$ is at least $m/\l_p[3]$, thus concluding the proof of the lemma.
\end{proof}

\section{Proof of the main result}\label{mult14-s:proof}
As we remarked in Section~\ref{mult14-s:strat}, to prove Theorem~\ref{mult14-t:main}, we shall find it useful to work with multigraphs. Before we proceed further, we set out some notation.

Let $G=(V,E)$ be a multigraph. We say that two vertices are neighbours in $G$, or are adjacent to each other in $G$, if they are joined by \emph{at least one edge} in $G$. Given $v \in V$, we write $\Deg(v)$ for the number of edges incident to $v$, and $\Gamma(v)$ for the set of neighbours of $v$; as $\Deg(v)$ is not necessarily equal to the cardinality of $\Gamma(v)$ in a multigraph, we write $\gamma(v) = |\Gamma(v)|$ for the number of distinct neighbours of $v$. Given $v \in V$ and $t\in \N$, we write $\Gamma_{t}(v)$ for the set of vertices joined to $v$ by exactly $t$ parallel edges and define $\gamma_t(v) = |\Gamma_t(v)|$. So for example, $\Gamma_0(v)$ is the set of vertices not adjacent to $v$, $\Gamma(v) = \bigcup_{t \ge 1} \Gamma_t (v)$, $\gamma(v) = \sum_{t\ge 1}\gamma_t(v)$, and $\Deg(v) =  \sum_{t \ge 0} t \gamma_t (v)$.

Given $U \subset V$, we denote the subgraph of $G$ induced by $U$ by $G[U]$. For a subgraph $H$ of $G$, we shall write $e(H)$ for the number of edges of $H$ counted with multiplicity, and for $U \subset V$, we write $e(U)$ to denote $e(G[U])$. Finally, we define the multiplication table $\M(G)$ of $G$ as we did for simple graphs by setting
\[\M(G)=\{e(U): U\subset V\}.\]

If $G=(X,Y;E)$ is a bipartite graph with vertex classes $X$ and $Y$, then note that for any $X' \subset X$ and $Y' \subset Y$, we have
\[ e(X' \cup Y') = \sum_{x \in X'} \Deg(x) - e(X' \cup (Y \setminus Y')); \]
we shall make use of this simple observation repeatedly in the proof of Theorem~\ref{mult14-t:main}.

The next lemma will be useful when dealing with sparse graphs in the proof of Theorem~\ref{mult14-t:main}; an analogous proposition for \emph{simple} bipartite graphs appears in~\citep{AKS}.

\begin{lemma}\label{mult14-graphlemma}
Let $G=(X,Y;E)$ be a bipartite multigraph with at most $r\ge1$ parallel edges between any pair of vertices and suppose that each vertex of $G$ has positive degree. If $|X| = n$, then
\[|\M(G)\cap [l]| \ge \frac{l}{2r}\]
for each $1\le l\le n$.
\end{lemma}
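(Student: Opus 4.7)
The strategy centres on the following \emph{star argument}: for any vertex $v \in V(G)$, enumerate $\Gamma(v) = \{u_1, \dots, u_{\gamma(v)}\}$ in any order and form the partial sums $T_j = \sum_{i=1}^{j} t(v, u_i)$, where $t(v, u_i)$ denotes the number of edges between $v$ and $u_i$. Then $T_j$ is exactly the edge count of the induced subgraph $\{v\} \cup \{u_1, \dots, u_j\}$, so $0 = T_0 < T_1 < \cdots < T_{\gamma(v)} = \Deg(v)$ are all elements of $\M(G)$, with consecutive gaps at most $r$. I plan to split into two cases according to whether $G$ has a vertex of degree at least $l$.

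In the first case, if some vertex $v$ has $\Deg(v) \ge l$ then the partial sums $T_j$ at $v$ must traverse $[0, l]$ in steps of size at most $r$, so at least $\lceil l/r \rceil \ge l/(2r)$ of them lie in $[l]$, and we are done. In the second case, every vertex has degree at most $l - 1$. Order $X = \{x_1, \dots, x_n\}$ by non-decreasing degree and set $S_i = \{x_1, \dots, x_i\} \cup Y$, so that $e(S_i) = \sum_{j \le i} \Deg(x_j)$ is strictly increasing from $0$ to $|E(G)| \ge n \ge l$. Let $k$ be the largest index with $e(S_k) \le l$; then $e(S_1), \dots, e(S_k)$ are $k$ distinct elements of $\M(G) \cap [l]$. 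If $\Deg(x_{k+1}) \le 2r$, then since the degrees are sorted we have $e(S_{k+1}) \le 2r(k+1)$, and combined with $e(S_{k+1}) > l$ this gives $k \ge l/(2r)$, as required. Otherwise $\Deg(x_{k+1}) > 2r$, so $\gamma(x_{k+1}) \ge 3$, and I would combine the prefix values $e(S_1), \dots, e(S_k)$ with the star partial sums at $x_{k+1}$ (which lie in $[1, \Deg(x_{k+1})] \subset [1, l - 1]$) to reach the required count of distinct values in $\M(G) \cap [l]$.

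The main obstacle is this final sub-case. The prefix values alone need not reach $l/(2r)$ in number when a single vertex $x_{k+1}$ contributes a large jump; while the star partial sums at $x_{k+1}$ lie in the ``low'' range $[1, \Deg(x_{k+1}))$ and the prefix sums eventually approach $l$, the two sources may nevertheless share values. Establishing rigorously that the union of these two sets of induced subgraph sizes contains at least $l/(2r)$ distinct elements of $[l]$ is the technical heart of the argument; should this combining step fail, I would fall back on a local construction that adds $Y$-vertices alongside $X$-vertices so that every added vertex contributes between $1$ and $2r$ new edges, after which a straightforward counting completes the proof.
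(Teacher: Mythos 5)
Your Case 1 (a vertex of degree at least $l$) and the easy branch of Case 2 (where $\Deg(x_{k+1})\le 2r$) are fine, up to the floor/ceiling slack that the paper also ignores. The problem is that the remaining sub-case, which you defer, is not a technical loose end but the entire content of the lemma, and your primary plan for it provably cannot reach the bound. Take $G$ to be a disjoint union of complete bipartite blocks, each block joining $b=\sqrt{l/r}$ vertices of $X$ to $b$ vertices of $Y$ with exactly $r$ parallel edges per pair, with enough blocks so that $|X|=n\ge l$. Every degree is $br=\sqrt{lr}<l$ and $>2r$, so you land in the problematic sub-case; the sorted prefix sums $e(S_i)=i\sqrt{lr}$ contribute only about $\sqrt{l/r}$ values of $[l]$, and the star at $x_{k+1}$ contributes only the $b\approx\sqrt{l/r}$ values $r,2r,\dots,br$. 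Even if these two families were disjoint you get $O\bigl(\sqrt{l/r}\bigr)$ values, far short of $l/(2r)$ when $l\gg r$ (the lemma is true for this $G$, but only because one can vary the chosen $Y$-vertices across many blocks simultaneously, which your scheme never does). So "combining the two sources" cannot be repaired by a sharper count; a genuinely different construction is needed.

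Your one-sentence fallback is essentially that construction, and it is where all the work lies in the paper's proof. There, one first prunes $Y$ so that every surviving $y$ has a private neighbour $x_y\in X$ with $\Gamma(x_y)=\{y\}$ (in the block example above this pruning is what collapses each block to a star and makes small increments possible at all); one then splits on whether some $y$ has many \emph{distinct} neighbours, $\gamma(y)>n/2r$ (note the right dichotomy is on $\gamma$, not on degree versus $l$ as in your split), handling that case by a star; and in the remaining case one builds sets $X_i$ with $|X_i|\le i$ such that $e(X_i\cup Y)$ increases by between $1$ and $r$ at each step. Crucially these sets are \emph{not} nested: when all the private vertices $x_y$ have been used, a double-counting argument produces an unused $x$ with $\Deg(x)\le |Y|$, and one swaps out some of the low-degree private vertices $x_{y_j}$ for $x$ to keep the increment in $(0,r]$. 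A purely nested greedy "each added vertex contributes between $1$ and $2r$ new edges" is not available in general without the pruning and the swap, so as written your proposal has a genuine gap at the heart of the lemma.
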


\begin{proof} If a vertex $y \in Y$ is such that each of its neighbours in $X$ has two or more distinct neighbours in $Y$, we delete $y$ from $Y$. Doing this repeatedly if necessary, we assume that every $y \in Y$ has a neighbour $x_y \in X$ with $\Gamma(x_y)=\{ y \}$. Note that even after these deletions, every vertex still has positive degree.

First, suppose that there exists a $y\in Y$ with $\gamma(y)>n/2r$. Then if $ X_1 \subset \dots \subset X_{l/2r}\subset \Gamma(y)$ is a strictly monotone increasing sequence of sets with $|X_{i}|=i$, then we see that the sizes of the subgraphs induced by the sets $X_i \cup \{y\}$ are all distinct and contained in $[l]$.

Hence, suppose that $\gamma(y)\le n/2r$ for every $y\in Y$. We shall construct a sequence $X_{0}, \dots ,X_{n/2r}$ of subsets of $X$ with $|X_{i}|\le i$ with the property that the sequence $(e_i)_{i=0}^{n/2r}$ defined by $e_i = e(X_i \cup Y)$ satisfies $0 < e_{i+1}-e_{i} \le r$ for each $i \ge 0$. If we can do this, then we are done since
\[e_{1},\dots,e_{l/2r}\in \M(G)\cap [l]\]
for each $l \in [n]$.

We build the sets $X_i$ recursively. We begin by setting $X_0 = \emptyset$. Having constructed $X_{i}$, we construct $X_{i+1}$ as follows. If there exists a $y\in Y$ with $x_{y} \not \in X_{i}$, we take $X_{i+1}=X_{i}\cup \{x_{y}\}$ in which case it is clear that $|X_{i+1} |\le i+1$; we also have $0 < e_{i+1}-e_{i} \le r$ since $y$ and $x_y$ are joined by at least 1 and at most $r$ edges.

Now suppose that $x_y \in X_i$ for every $y \in Y$. Since we have assumed that $\gamma(y)\le n/2r$ for each $y\in Y$, and since any two vertices of $G$ are joined by at most $r$ parallel edges, it follows that $e(G) \le n|Y|/2$. Since $|X_i| \le i \le n/2$, we conclude by double counting that there is a vertex $x \in X \setminus X_i$ such that $\Deg(x) \le |Y|$. Choose some $k$ vertices $y_1, \dots, y_k \in Y$ so that
\[ 0 < \Deg(x) - \sum_{j=1}^k \Deg(x_{y_j}) \le r.\]
This is possible since $\sum_{y \in Y}\Deg(x_{y}) \ge |Y| \ge \Deg(x)$ and since, for each $y \in Y$, $\Deg(x_y) \le r$ as $y$ is the sole neighbour of $x_y$. Now define
\[ X_{i+1} = \left(X_i \setminus \bigcup_{j=1}^kx_{y_j}\right) \cup \{x\}. \]
Clearly, $|X_{i+1}| \le i+1$. Since we have assumed that $x_y \in X_i$ for each $y \in Y$, we also see that
\[ e_{i+1} = e_i + \Deg(x) - \sum_{j=1}^k \Deg(x_{y_j}).\]

We have shown how to construct $X_{i+1}$ with all the requisite properties; this completes the proof.
\end{proof}

We are now in a position to prove Theorem~\ref{mult14-t:main}.
\begin{proof}[Proof of Theorem~\ref{mult14-t:main}]
Let $G = (X, Y; E)$ be a bipartite graph with $m$ edges; all the inequalities in our proof will hold when $m$ is sufficiently large.

The first step in proving Theorem~\ref{mult14-t:main} is to pass from $G$ to an induced subgraph of $G$ within which we have better control over the vertex degrees, while at the same time retaining a large fraction of the edges of $G$ in this induced subgraph.

Let $|X| = n_1$ and let $x_1, \dots, x_{n_1}$ be the vertices of $X$. Applying Lemma~\ref{mult14-largepartition} to the sequence $\Deg(x_1), \dots, \Deg(x_{n_1})$, we see that it is possible to find positive integers $k_1, r_1, d_1 \in \N$ and pairwise disjoint sets $X_1, \dots, X_{k_1} \in X^{(r_1)}$ such that $r_1 \le \log m$, $k_1 d_1 \ge m/\l_p[3]$, and

\[ \sum_{x \in X_j} \Deg(x) = d_1 \]
for $1 \le j \le k_1$. We delete the vertices $X \setminus (\bigcup_{j = 1}^{k_1} X_j)$ from $G$ and also discard any vertices of $Y$ which subsequently become isolated; note that our graph still has $k_1d_1 \ge m/\l_p[3]$ edges.

If $k_1$ is small, we need to work a bit harder. Suppose that $k_1 < m^{1/2}/ \l_p[4]$. Let $|Y| = n_2$ and let $y_1, \dots, y_{n_2}$ be the vertices of $Y$. We again apply Lemma~\ref{mult14-largepartition}, but on this occasion to the sequence $\Deg(y_1), \dots, \Deg(y_{n_2})$, to find positive integers $k_2, r_2, d_2 \in \N$ and pairwise disjoint sets $Y_1, \dots, Y_{k_2} \in Y^{(r_2)}$ such that $r_2 \le \log m$, $k_2d_2 \gg m/\l_p[6]$, and
\[ \sum_{y \in Y_j} \Deg(y) = d_2 \]
for $1 \le j \le k_2$. We then delete the vertices $Y \setminus (\bigcup_{j = 1}^{k_2} Y_j)$ from $G$ and discard any vertices of $X$ which subsequently become isolated. Observe that $G$ still has $k_2d_2\gg m/\l_p[6]$ edges.

Notice that after these deletions, $|X| \le k_1r_1 \le k_1\log{m}$ and $|Y| = k_2r_2 \le k_2\log{m}$. Since $G$ still has $\Omega( m/\l_p[6])$ edges, it follows that
\[ k_1 k_2\l_p[2] \gg \frac{m}{\l_p[6] }. \]
Consequently, if $k_1 < m^{1/2}/ \l_p[4]$, then $k_2 \gg (m^{1/2}/ \l_p[4]$.

Relabelling $X$ and $Y$ if necessary, note that $G$ now has the property that that there exist positive integers $k, r, d \in \N$ such that $k \gg m^{1/2}/\l_p[4]$, $r \le \log m$, $e(G) = kd \gg m/\l_p[6]$, and there exists a partition
\[ X = \bigcup_{j = 1}^k X_j \]
of $X$ into $k$ sets each of cardinality $r$ with the property that
\[ \sum_{x \in X_j} \Deg(x) = d \]
for $1 \le j \le k$.

Let $H = (X_H, Y_H; E_H)$ be the half-regular bipartite multigraph obtained from $G$ by contracting the vertices of each $X_j$ into a single vertex for $1 \le j \le k$. Clearly,
\[\M(H) \subset \M(G),\]
so it suffices to bound $|\M(H)|$ from below. Henceforth, we shall work exclusively with $H$, so in what follows, all vertex degrees, neighbourhoods, etc.\ will be with respect to the multigraph $H$. For easy reference, let us list the properties of $H$ that we shall require in the rest of the proof.

\begin{enumerate}
\item $H$ has no isolated vertices.
\item There are at most $r \le \log{m}$ parallel edges between any two vertices of $H$.
\item For each $x \in X_H$, $\Deg(x) = d$.
\item $|X_H| = k \gg m^{1/2}/\l_p[4]$.
\item $e(H) = kd \gg m/\l_p[6]$.
\end{enumerate}

Our goal now is to establish that $|\M(H)| \gg m/\l_p[12]$. We may assume that $d \ge \l_p[4]$. If not, then $k \gg m/\l_p[10]$ since $kd \gg m/\l_p[6]$. But then $|\M(H)| \ge k  \gg m/\l_p[10]$ since clearly, $d, 2d, \dots, kd \in \M(H)$.

We claim that we are also done if there exists a vertex $y \in Y_H$ and $0 \le a < b \le r$ such that $\gamma_a(y), \gamma_b(y) \ge k/2\l_p[2]$. Indeed, if such a $y$ exists, choose $V_1 \subset \Gamma_a (y)$ and $V_2 \subset \Gamma_b (y)$ and note that
\[ e(V_1 \cup V_2 \cup (Y_H \setminus \{ y\})) = (d-a)|V_1| + (d-b)|V_2|,\]
from which it follows that
\[ \S\left(\gamma_a (y) \circ (d-a) , \gamma_b (y) \circ (d-b)\right)  \subset \M(H).\] Note that $\gcd(d-a, d-b) \le b-a \le \log m$. Since $k \gg m^{1/2}/\l_p[4]$, $d\ge \l_p[4]$ and $kd \gg m/\l_p[6]$, it follows from Lemma~\ref{mult14-numtheo} that
\begin{align*}
|\M(H)| &\ge \frac{k}{2\l_p[2]}\min \left\{ \frac{k}{2\l_p[2]} , \frac{d - \log m}{\log m}\right\}\\
&\gg \min \left\{ \frac{k^2}{\l_p[4]} , \frac{kd}{\l_p[3]}\right\}\\
&\gg \min \left\{ \frac{m}{\l_p[12]} , \frac{m}{\l_p[9]}\right\} \gg \frac{m}{\l_p[12]}.
\end{align*}

Hence, in what follows, we shall assume that $d \ge \l_p[4]$ and that for each $y \in Y_H$, there is at most one $0 \le \tau \le r$ such that $\gamma_\tau(y) \ge k/2\l_p[2]$.

Since $\sum_{i=0}^r \gamma_i(y) = k$ and $r \le \log m$, a consequence of assuming there is at most one $0 \le \tau \le r$ for which $\gamma_\tau(y) \ge k/2\l_p[2]$ is that there in fact exists a unique $\tau$ for which $\gamma_\tau(y) \ge k(1-1/2\log m)$; we call this unique value $\tau$ the \emph{type of $y$} and say that the vertex $y$ is of \emph{type-$\tau$}. In the rest of the proof, each $y\in Y_H$ will be assumed to have unique type $\tau \le \log m$.

Note that a vertex $y \in Y_H$ of type-$0$ only has a few distinct neighbours in $X_H$. We shall distinguish two cases depending on the number of type-$0$ vertices of $Y_H$. We first deal with the case where most vertices are of type-$0$.

\textbf{Case 1: All but at most $d/2\log m$ vertices of $Y_H$ are of type-$0$.} Note that since each $x \in X_H$ has at least $d/r \ge d/\log m$ distinct neighbours in $Y_H$, each $x \in X_H$ is adjacent to at least one vertex of type-$0$ in $Y_H$ as  at most $d/2\log m$ vertices of $Y_H$ are of nonzero type. Next, observe that if $y\in Y_H$ is of type-$0$, then
\[\gamma(y) = \sum_{t=1}^{r} \gamma_t(y) < \frac{rk}{2\l_p[2]} \le \frac{k}{2\log m}.\]
 Consequently, we can greedily construct a set $U \subset Y_H$ of type-$0$ vertices such that the set
 \[S = \bigcup_{y \in U} \Gamma(y) \subset X_H\] satisfies, writing $s = |S|$,
 \[  \frac{k}{2}-\frac{k}{2\log{m}} \le s \le \frac{k}{2} .\]

Let $F$ be the subgraph of $H$ induced by $S \cup U$. We conclude from Lemma~\ref{mult14-graphlemma} that
\[ \M(F) \cap [d] \ge \min \left\{\frac{k}{6\log m}, \frac{d}{2\log m}\right\}.\]
The inequality above follows directly from Lemma~\ref{mult14-graphlemma} when $s \ge d$. If $s < d$, then since $s\ge k/2 - k/2\log m > k/3$, we have $\M(F) \cap [k/3] \subset \M(F) \cap [d]$ and the claimed inequality once again follows from Lemma~\ref{mult14-graphlemma}. We conclude that $\M(F)$ contains $\Omega (\min\{k, d\}/\log m)$ different values modulo $d$.

Now, if $S' \subset S$, $X' \subset X_H \setminus S$ and $U' \subset U$, then
\[ e(S'\cup X'\cup (Y_H\setminus U') ) = d|X'|+d|S'|-e(S'\cup  U'). \]
As we have already observed, there exist $\Omega (\min\{k, d\}/\log m)$ choices of $S'$ and $U'$ for which the quantities $d|S'|-e(S'\cup  U')$ are all distinct modulo $d$; since $|X'|$ can be any integer between $0$ and $k - s \ge k/2$, we see that
\[|\M(H)| \gg k \min\left \{ \frac{k}{\log m},\frac{d}{\log m} \right \} = \min\left \{ \frac{k^2}{\log m},\frac{kd}{\log m} \right \} \gg \frac{m}{\l_p[9]}.\]

\textbf{Case 2: At least $d/2\log m$ vertices of $Y_H$ are not of type-$0$.}
Set $p = \log m$ and $q = d/4\log m$. As $d \ge \l_p[4]$, we can find $1 \le \tau \le \log m$ and a set $U \subset Y_H$ of $p$ vertices all of which are of type-$\tau$. Let
\[ S=\bigcap_{y \in U} \Gamma_\tau(y) \subset X_H;\]
since $\gamma_\tau(y) \ge k(1-1/2\log m)$ for each $y \in U$, note that $s=|S| \ge k/2$.

Now choose $q$ vertices from $Y_H \setminus U$, say $y_{1},\dots,y_{q}$, so that each of these vertices is of nonzero type; this is possible since there are at least $d/(2\log m) - p > q$ vertices of nonzero type in $Y_H \setminus U$. For $1 \le i \le q$, let the type of $y_i$ be $\tau_i>0$, let $Y_{i}=Y_H \setminus\{y_{1},\dots,y_{i}\}$ and let $H_{i}$ be the subgraph of $H$ induced by $S\cup Y_{i}$ . We say that $H_i$ is \emph{good} if at least $2s/3$ vertices of $S$ have the same degree in $H_{i}$. We take $H_{0} = H$; clearly, $H_0$ is good since every vertex of $S$ has degree $d$ in $H_{0}$.

\textbf{Case 2A: $H_{1},\dots,H_{q}$ are all good.}
Since $H_i$ is good, we know that there are at least $2s/3$ vertices of $S$ with the same degree in $H_i$; let $\alpha_i$ be this common degree and let $S_{i}\subset S$ be the set of those vertices with degree $\alpha_i$ in $H_i$. Clearly, $\alpha_0 = d$. We claim that $\alpha_{i}=\alpha_{i-1}-\tau_{i}<\alpha_{i-1}$ for each $1 \le i \le q$. To see this, first note that every vertex of $S_{i-1}\cap \Gamma_{\tau_{i}}(y_{i})$ has degree $\alpha_{i-1}-\tau_{i}$ in $H_{i}$. Recall that $s = |S| \ge k/2 \gg m^{1/2}/\l_p[4]$ and $\gamma_{\tau_i}(y_{i})\ge k(1-1/2\log m)$; since $H_{i-1}$ is good,
\[ \lvert S_{i-1}\cap \Gamma_{\tau_{i}}(y_{i})\rvert \ge |S_{i-1}|-\frac{k}{2\log m} \ge \frac{2s}{3} - \frac{k}{2\log{m}} > \frac{s}{2}.\]
As we have assumed that $H_i$ is good, we know that if more than $s/2$ vertices of $S$ have the same degree in $H_i$, then these vertices must all belong to $S_i$ and hence, $\alpha_i = \alpha_{i-1}-\tau_{i}$.

If $S'_i \subset S_{i}$, then note the $e(S'_i \cup Y_i) = \alpha_i|S'_i|$. Hence, writing $A = \{\alpha_1, \dots, \alpha_q\}$, we see that
\[ A \cdot [2s/3] \subset \M(H) \]
Clearly, $\max A \le d \le m$ while $s^3 \gg k^3 \gg m^{3/2}/ \l_p[12]$, so by Lemma~\ref{mult14-primes}, $|A \cdot [2s/3]| \gg sq/\log s$. It follows that
\[ |\M(H)| \gg \frac{sq}{\log s} \gg \frac{kd}{\l_p[2]} \gg \frac{m}{\l_p[8]}. \]

\textbf{Case 2B: One of $H_1, \dots, H_q$ is not good.}
Let $1 \le l \le q$ be the minimal index for which $H_l$ is not good. Since $H_0, \dots, H_{l-1}$ are all good, we can, arguing as in the previous case, find $s/2$ vertices of $S$ which all have the same degree $\alpha$ in $H_l$ with $\alpha \ge d - l\log m \ge d - q\log m = 3d/4$; let $S_\alpha \subset S$ be this set of vertices. Also, as $H_l$ is not good, we know that $S_\beta = S \setminus S_\alpha$ contains at least $s/3$ vertices.

For $x \in S_\beta$, let $\beta_x \neq \alpha$ denote the degree of $x$ in $H_l$. For each $x \in S_\beta$, there exists, by Lemma~\ref{mult14-gcdlemma}, an $0\leq f_x\leq \log m$  such that
\begin{align*}
\gcd(\alpha-f_{x}\tau,\beta_{x}-f_{x}\tau) &\ll |\alpha - \beta_x|^{1/\log m} \tau \l_p[2] \\
&\ll m^{1/\log m} \tau \l_p[2] \ll \l_p[3].
\end{align*}
So there exists $0 \leq f\leq \log m$ and a set $S_f \subset S_\beta$ of size at least at least $|S_\beta |/\log m \ge s/3 \log m \ge k/ 6 \log m$ such that for each $x \in S_f$,
\[\gcd(\alpha-f\tau,\beta_{x}-f\tau) \ll \l_p[3].\]

Recall that $p = \log m \ge f$, and that $S=\bigcap_{y \in U} \Gamma_\tau(y)$, where $U\subset Y_H$ is a set of at least $p$  vertices of type-$\tau$. Fix a subset of $U$ of size $f$, say $U_f$. We shall only consider the induced subgraphs of $H[S\cup (Y_l\setminus U_f)]$. If $S_\alpha' \subset S_{\alpha}$ and $S_f' \subset S_f$, then note that
\[ e(S_\alpha' \cup S_f' \cup (Y_{l} \setminus U_f )) = (\alpha-f\tau)|S'_\alpha| + \sum_{x\in S_f'}(\beta_x - f\tau). \]
Hence, writing $\boldsymbol \beta$ for the sequence $(\beta_x - f\tau)_{x \in S_f}$, we see from the arguments above that
\[ \S(\boldsymbol \beta, s/2 \circ (\alpha - f\tau)) \subset \M(H)\]
since $|S_\alpha| \ge s/2$.

As $\gcd(\alpha-f\tau,\beta_{x}-f\tau) \ll \l_p[3]$ for each $x\in S_f$ and since $\alpha-f\tau \ge 3d/4 - \l_p[2] > d/2$, we use Lemma~\ref{mult14-modulolemma} to deduce that
\begin{align*}
|\M(H)| &\ge |\S(\boldsymbol \beta, s/2 \circ (\alpha - f\tau))| \\
&\gg \frac{s}{2} \min \left\{ \frac{\alpha - f\tau}{\l_p[3]},|S_f| \right\}\\
&\gg \frac{k}{4} \min \left\{ \frac{d}{\l_p[3]},\frac{k}{\log m} \right\}\\
&\gg \min \left\{ \frac{kd}{\l_p[3]},\frac{k^2}{\log m} \right\} \gg \frac{m}{\l_p[9]}.
\end{align*}

This concludes the proof of Theorem~\ref{mult14-t:main}.
\end{proof}

\section{Conclusion}\label{mult14-s:conc}
There are a number of problems related to the question studied in this paper worth investigating of which Conjecture~\ref{mult14-c:main} is perhaps the most natural. We discuss a few other related questions below.

Let $\M(m)$ denote the minimum value of $|\M(G)|$ taken over all bipartite graphs $G$ with $m$ edges. Trivially, $\M(m) \le m$, and in this note, we have shown that $\M(m) \gg m/\l_p[12]$. The question of determining the correct order of magnitude of $\M(m)$ still remains.

\begin{problem}\label{mult14-p:m}
Determine the asymptotic order of magnitude of $\M(m)$.
\end{problem}

We suspect Problem~\ref{mult14-p:m} might be difficult. For example, it is not at all clear that $\M(m)$ is an increasing function; indeed, we believe otherwise. We propose the following question as a possible first step towards settling Problem~\ref{mult14-p:m}.

\begin{problem}
Is $\M(m) = o(m)$ for every $m \in \N$?
\end{problem}

We know that $\M(n^2)=o(n^2)$ and in general, we suspect that the exact value of $\M(m)$ depends a great deal on how close $m$ is to a number with a reasonably `balanced' factorisation. Let us say that $m$ is \emph{$k$-balanced} if there exist positive integers $a,b \ge k$ such that $m = ab$. The set of positive integers $m$ such that $m$ is $(\log m)$-balanced has asymptotic density $1$ in $\N$. If $m = ab$ is a $(\log m)$-balanced factorisation of $m$, then as noted in~\citep{m_col}, one can show that $\M(m) = o(m)$ by considering $K_{a,b}$, the complete bipartite graph between two disjoint sets of size $a$ and $b$, and using Ford's estimates for the size of the set $[a] \cdot [b]$. It would be interesting to decide if one can say something similar for all sufficiently large positive integers.

Finally, it would be interesting to determine the structure of extremal graphs. Recall that Conjecture~\ref{mult14-c:main} asserts that the amongst all bipartite graphs with $n^2$ edges, $K_{n,n}$ has the smallest multiplication table. Of course one could, and should, ask what the extremal graphs are when the number of edges is no longer a square. We believe that if $|\M(G_m)|=\M(m)$ for some bipartite graph $G_m$ with $m$ edges, then $G_m$ must necessarily contain a large subgraph that `resembles' a complete bipartite graph. For example, a natural conjecture is that $\M(n(n+1)) = |\M(K_{n,n+1})|$; in general however, we have no precise guesses for what the extremal graphs are.

\section*{Acknowledgements}
The problem of proving Conjecture~\ref{mult14-c:main} in a weak quantitative form was proposed by the first author in August 2013 at the 5\textsuperscript{th} Eml\'ekt\'abla Workshop in Budapest; also, some of the ideas used here to prove Theorem~\ref{mult14-t:main} were developed while the authors were visitors at the IMT Institute for Advanced Studies Lucca. The authors are grateful to Guido Caldarelli and the other members of the Complex Networks Group at IMT Lucca for their hospitality and in addition, the first author is grateful to the organisers of the Eml\'ekt\'abla Workshop for their hospitality.

\bibliographystyle{amsplain}
\bibliography{bipartite_mult}

\providecommand{\bysame}{\leavevmode\hbox to3em{\hrulefill}\thinspace}
\providecommand{\MR}{\relax\ifhmode\unskip\space\fi MR }
\providecommand{\MRhref}[2]{%
  \href{http://www.ams.org/mathscinet-getitem?mr=#1}{#2}
}
\providecommand{\href}[2]{#2}
\begin{thebibliography}{10}

\bibitem{ABKW}
N.~Alon, J.~Balogh, A.~V. Kostochka, and W.~Samotij, \emph{Sizes of induced
  subgraphs of {R}amsey graphs}, Combin. Probab. Comput. \textbf{18} (2009),
  459--476.

\bibitem{AB}
N.~Alon and B.~Bollob{\'a}s, \emph{Graphs with a small number of distinct
  induced subgraphs}, Discrete Math. \textbf{75} (1989), 23--30.

\bibitem{AK}
N.~Alon and A.~V. Kostochka, \emph{Induced subgraphs with distinct sizes},
  Random Structures Algorithms \textbf{34} (2009), 45--53.

\bibitem{AKS}
N.~Alon, M.~Krivelevich, and B.~Sudakov, \emph{Induced subgraphs of prescribed
  size}, J. Graph Theory \textbf{43} (2003), 239--251.

\bibitem{Ax}
M.~Axenovich and J.~Balogh, \emph{Graphs having small number of sizes on
  induced $k$-subgraphs}, SIAM J. Discrete Math. \textbf{21} (2007), 264--272.

\bibitem{belabook1}
B.~Bollob{\'a}s, \emph{Modern graph theory}, Graduate Texts in Mathematics,
  Springer-Verlag, New York, 1998.

\bibitem{CFM}
N.~Calkin, A.~Frieze, and B.~D. McKay, \emph{On subgraph sizes in random
  graphs}, Combin. Probab. Comput. \textbf{1} (1992), 123--134.

\bibitem{Erdos1}
P.~Erd\H{o}s, \emph{Some remarks on number theory}, Riveon Lematematika
  \textbf{9} (1955), 45--48.

\bibitem{Erdos2}
\bysame, \emph{An asymptotic inequality in the theory of numbers}, Vestnik
  Leningrad. Univ. \textbf{15} (1960), 41--49.

\bibitem{EFS1}
\bysame, \emph{Some of my favorite problems in various branches of
  combinatorics}, Matematiche (Catania) \textbf{47} (1992), 231--240.

\bibitem{EM}
\bysame, \emph{Some of my favourite problems in number theory, combinatorics,
  and geometry}, Resenhas \textbf{2} (1995), 165--186.

\bibitem{EFS2}
\bysame, \emph{Some recent problems and results in graph theory}, Discrete
  Math. \textbf{164} (1997), 81--85.

\bibitem{EH}
P.~Erd{\H{o}}s and A.~Hajnal, \emph{On the number of distinct induced subgraphs
  of a graph}, Discrete Math. \textbf{75} (1989), 145--154.

\bibitem{Ford1}
K.~Ford, \emph{The distribution of integers with a divisor in a given
  interval}, Ann. of Math. \textbf{168} (2008), 367--433.

\bibitem{erd_book}
R.~R. Hall and G.~Tenenbaum, \emph{Divisors}, Cambridge Tracts in Mathematics,
  Cambridge University Press, Cambridge, 1988.

\bibitem{pntbook}
G.~J.~O. Jameson, \emph{The prime number theorem}, London Mathematical Society
  Student Texts, Cambridge University Press, Cambridge, 2003.

\bibitem{Koukoulopoulos1}
D.~Koukoulopoulos, \emph{Localized factorizations of integers}, Proc. Lond.
  Math. Soc. \textbf{101} (2010), 392--426.

\bibitem{Koukoulopoulos2}
\bysame, \emph{On the number of integers in a generalized multiplication
  table}, J. Reine Angew. Math. \textbf{689} (2014), 33--99.

\bibitem{m_col}
B.~Narayanan, \emph{Exactly {$m$}-coloured complete infinite subgraphs}, J.
  Combin. Theory Ser. B \textbf{106} (2014), 163--173.

\bibitem{erd_survey}
I.~Z. Ruzsa, \emph{Erd{\H o}s and the integers}, J. Number Theory \textbf{79}
  (1999), 115--163.

\bibitem{Solymosi}
J.~Solymosi, \emph{Bounding multiplicative energy by the sumset}, Adv. Math.
  \textbf{222} (2009), 402--408.

\bibitem{Tenenbaum}
G.~Tenenbaum, \emph{Sur la probabilit\'e qu'un entier poss\`ede un diviseur
  dans un intervalle donn\'e}, Compositio Math. \textbf{51} (1984), 243--263.

\end{thebibliography}
\end{document}